\theoremstyle{plain}
\newtheorem{theorem}{Theorem}[section]
\theoremstyle{definition}
\newtheorem{lemma}[theorem]{Proposition}
\numberwithin{equation}{section}
\begin{document}

\title{Groups of order $p^3$ are mixed Tate}
\author{Tudor P\u adurariu}
\address{Department of Mathematics, Columbia University, 
2990 Broadway, New York, NY 10027}
\email{tpadurariu@gmail.com}

\date{}
\maketitle
\begin{abstract}
Let $G$ be a finite group. A natural place to study the Chow ring of the classifying space $BG$ is Voevodsky's triangulated category of motives, inside which Morel--Voevodsky and Totaro have defined motives $M(BG)$ and $M^c(BG)$, respectively. We show that, for any group G of order $p^3$ over a field of characteristic not $p$ which contains a primitive $p^3$-th root of unity, the motive $M(BG)$ is a mixed Tate motive. We also show that, for a finite group $G$ over a field of characteristic zero, $M(BG)$ is a mixed Tate motive if and only $M^c(BG)$ is a mixed Tate motive.
\end{abstract}

\renewcommand{\thefootnote}{\fnsymbol{footnote}} 
\footnotetext{\emph{$2010$ Mathematics Subject Classification:} Primary 14C15, Secondary 18E30, 20D05.

\emph{Key words:} finite groups, Voevodsky's category of motives, classifying spaces, mixed Tate}     
\renewcommand{\thefootnote}{\arabic{footnote}} 

\section{Introduction}

\subsection{Mixed Tate groups} The group cohomology of a group $G$ can be computed as the cohomology (with twisted coefficients) of the classifying space $BG$. One would like to understand what part of the group cohomology of $G$ comes from algebraic geometry. 
Morel--Voevodsky \cite{mor} and Totaro \cite{to} defined the motive of a classifying space $M(BG)$ and the motive of a classifying space with compact supports $M^c(BG)$, respectively, as objects in $\text{DM}(k;R)$, Voevodsky's ``big" triangulated category of motives over the field $k$ with coefficients in a commutative ring $R$ \cite{voe}. One can recover the motivic (co)homology groups of $BG$ as defined by Edidin--Graham \cite{edi} by computing the motivic (co)homology groups of these motives. 

Inside $\text{DM}(k;R)$, one can define the subcategory of mixed Tate motives $\text{DMT}(k; R)$ as the smallest triangulated
and closed  under arbitrary direct sums subcategory which contains all the objects $R(j)$ with $j\in \mathbb{Z}$. 
We prove in Theorem \ref{thm5} that the motive $M(BG)$ is \textit{mixed Tate} if and only if $M^c(BG)$ is mixed Tate for a finite group $G$. We will simply say that a finite group $G$ is mixed Tate if $M^c(BG)$ is in the category $\text{DMT}(k; R)$. From now on, we will restrict the discussion in the Introduction to finite groups. 
Our main result is:

\begin{theorem}\label{thm}
Let $G$ be a group of order $p^3$ and let $k$ be a field of characteristic not $p$ which contains a primitive $p^3$-root of unity. 
Then $M^c(BG)$ is mixed Tate.
\end{theorem}

One is interested in understanding $p$-groups because one recovers important information about a given finite group by studying its Sylow groups. The precise form of this philosophy which is applicable in our case is \cite[Lemma 9.3]{to} which says that $BG$ is mixed Tate with $\mathbb{Z}/p$ or $\mathbb{Z}_{(p)}$ coefficients if $BH$ is, where $H$ is a $p$-Sylow subgroup of $G$.

\subsection{Other properties of finite groups}
A group $G$ is called \textit{stably rational} if it has a faithful representation $V$ such that $V\sslash G$ is stably rational over $\mathbb{C}$. A group $G$ has \textit{the weak Chow--K\"{u}nneth} property if $CH^{*} \left(BG\right)\twoheadrightarrow CH^{*} \left(BG_E\right)$ is surjective for every extension of fields $E/k$. 
If $G$ is mixed Tate, then $BG$ is stably rational, satisfies the weak Chow--Kunneth property, and has \textit{trivial unramified cohomology}, see \cite[Section 9]{to} for definitions and references. We do not know whether any of these properties of a finite group $G$ are equivalent.


\subsection{Related results}
In all the following examples, we assume that $k$ is a field in which $p$ is invertible and which contains $|G|$-roots of unity, where $G$ is the group studied.

The starting point for studying these properties of a group $G$ are Bogomolov's \cite{bo} and Saltman's \cite{s} examples of groups of order $p^7$ and $p^9$, respectively, which are not stably rational. 
Chu--Kang, and Chu et. al. \cite{chu}, \cite{chuhu} showed that for every $p$-group $G$ of order $\leq p^4$ or $2$-group of order $\leq 2^5$ and for every $G$-representation $V$, the quotient $V\sslash G$ is rational. This property is stronger than saying that $BG$ is stably rational.

Bogomolov \cite{bo} showed (with a further correction in \cite{hoshi}) that every $p$-group of order $\leq p^4$, for $p$ odd prime, or $\leq 2^5$ for $p$ equal to $2$, has trivial unramified cohomology, and that these are the best possible bounds.

Totaro \cite[Section 10]{to} showed that all $2$-groups of order $\leq 2^5$ and all $p$-groups of order $\leq p^4$ have the weak Chow-K\"{u}nneth property. He also showed \cite[Corollary 9.10]{to} that all abelian $p$-groups are mixed Tate. There are groups of order $p^5$ for $p$ odd which do not have the weak Chow-K\"{u}nneth property \cite[Discussion after Corollary 3.1]{to} and thus which are not mixed Tate.

In view of these examples, it is worth investigating whether all $p$-groups of order $\leq p^4$ and all $2$-groups of order $\leq 2^5$ are actually mixed Tate. Our methods only apply to $p$-groups of order $\leq p^3$ and to some groups of order $p^4$ as explained in Section \ref{4}.

\subsection{Structure of the paper}
		In Section \ref{2}, we recall the definitions of linear schemes and of the motives $M(X)$ and $M^c(X)$ for a quotient stack $X$ in $\text{DM}(k; R)$. In Section \ref{3}, we reduce the proof of Theorem \ref{thm} to Theorem \ref{41} and we prove three technical preliminary Propositions. 
		Section \ref{4} contains the proof of Theorem \ref{41} which says that for a group $G$ of order $p^3$ and $V$ an irreducible $G$-representation of dimension $p$, the scheme $V\sslash G$ is a linear scheme. The proof is inspired by a result of
		Chu--Kang \cite{chu} that says that $V\sslash G$ is rational for $G$ of order $p^3$ and $V$ a $G$-representation.
		In Section \ref{5}, we show that $M(BG)$ is mixed Tate if and only if $M^c(BG)$ is mixed Tate.
		
\subsection{Acknowledgements}
The paper was written in the scholar year 2014-2015 while I was a senior at UCLA.
	I would like to thank Burt Totaro for suggesting the problem and for advising me throughout the writing of the paper. I thank the referees for numerous useful comments on previous drafts of the paper that improved the exposition.

\section{Definitions and notations}\label{2}
	
\subsection{}	Fix $p$ a prime number.
	Unless otherwise stated, we will denote by $k$ a field of characteristic not $p$ which contains a primitive $p^2$-root of unity. In Section \ref{5}, we assume that the characteristic of $k$ is zero.
	
	All the schemes considered will be separated schemes of finite type over $k$. 
	One can define the Chow groups $CH_i(X)$ as the group of dimension $i$ algebraic cycles modulo rational equivalence \cite{fu}. One can further define the higher Chow groups \cite{bl}, or the motivic (co)homology groups of such a scheme \cite{voe}, see \cite[Section 5]{to} for a brief overview of these topics.
	
	Let $A$ be an affine $k$-scheme with a linear action of a reductive group $G$. We denote by $A\sslash G:=\text{Spec}\left(\mathcal{O}_A^G\right)$ the quotient scheme and by $A/G$ the corresponding quotient stack.
	
	For a finite group $G$, we denote by $|G|$ the order of $G$. We denote by $[n]$ the set $\{1,\cdots, n\}$.

\subsection{}\label{dmdef}	
	We will work in the category $\text{DM}(k;R)$, the ``big" triangulated category of motives over the field $k$ with coefficients in the commutative ring $R$ \cite[Section 5]{to}, see also the general references \cite{maz}, \cite{voe}. 
	
	The exponential characteristic of $k$ is $1$ if $k$ has characteristic zero and $p$ if $k$ has characteristic $p>0$.
	We will assume throughout the paper that the exponential characteristic of $k$ is invertible in $R$.
Voevodsky defined two natural functors from the category of schemes to $\text{DM}(k;R)$, which we will write as $M$ and $M^c$ \cite{voe}, see also \cite[Section 5]{to}. 

	We can associate a motive to any quotient stack $X=Y/G$, with $Y$ a quasi-projective scheme over $k$ and $G$ an affine group scheme of finite type over $k$ such that there is a $G$-equivariant ample line bundle on $Y$, as follows \cite[Section 8]{to}. Choose $G$-representations $V_1\hookrightarrow V_2\hookrightarrow \dots$ of $G$ such that $\text{codim}\,(S_i\text{ in }V_i)$ increases to infinity, where $S_i$ is the locus of $V_i$ where $G$ does not act freely. Denote by $M_i(X):=M\left(\left((V_i-S_i)\times Y\right)/G\right)$ and define
	\[M(X)=\text{hocolim}\left(\dots\leftarrow M_2(X)\leftarrow M_1(X)\right),\]
where the maps are induced by the inclusions $V_i\hookrightarrow V_{i+1}$. To define $M^c(X)$, choose $G-$representations $\dots \twoheadrightarrow V^{2}\twoheadrightarrow V^1$ with loci $S^i$ with the same property as above. Let $M^c_i(X):=M_c\left(\left((V^i-S^i)\times Y\right)/G\right)$. Let $n_i$ be the rank of the bundle $V^i$. Define
 \[M^c(X)=\text{holim}\left(\dots\rightarrow M^c_2(X)(-n_2)[-2n_2]\rightarrow M^c_1(X)(-n_1)[-2n_1]\right),\]
where the maps are induced by the projections $V^{i+1}\twoheadrightarrow V^i$. The definitions of $M^c(X)$ and $M(X)$ are independent of the choices of $V_i$ and $V^i$, see \cite[Theorem 8.4 and the discussion in Section 8]{to}.

\subsection{}\label{strati}	
\textit{A linear scheme} over $k$ is defined inductively as follows \cite[Section 5, page 17-18]{to}: all the affine spaces are linear; if $Z\subset X$ is closed, and $X$ and $Z$ are linear, then $X\setminus Z$ is linear; further, if $X\setminus Z$ and $Z$ are linear, then $X$ is linear \cite[page 17]{to}. There are examples of schemes with mixed Tate motive, but which are not linear schemes \cite{gu}.

Let $X$ be a linear scheme over $k$ and let $R$ be a ring whose exponential characteristic is invertible in $R$. Then $M^c(X)$ is a mixed Tate motive.
\smallskip

Let $I$ be a finite set, let $X_i\subset X$ be locally closed irreducible subschemes of $X$, and let $d=\dim(X)$. For $e\leq d$, let $Y_e$ be the union of $X_i$ for $i\in I$ such that $\dim(X_i)=e$.
We say that $X$ has a \textit{stratification} $(X_i)_{i\in I}$ if there is a partition of underlying topological spaces
\[X=\bigsqcup_{i\in I}X_i\]
and $Y_e$ is open in $X\setminus \bigsqcup_{f>e}Y_f=\bigsqcup_{g\leq e}Y_g$ for every $e\leq d$.

\section{The plan of the proof and preliminaries}\label{3}

\subsection{} Theorem \ref{thm} is known for abelian groups \cite[Corollary 9.10]{to}.
The two non-abelian groups of order $8$ are the dihedral and the quaternion group. Theorem \ref{thm} holds for them by \cite[Corollary 9.7]{to}.
It thus suffices to show the following:

\begin{theorem}\label{31}
Let $p$ be an odd prime,
let $k$ be a field of characteristic not $p$ which contains a primitive $p^2$-root of unity,
and let $G$ be a non-abelian group of order $p^3$. Then $M^c(BG)$ is mixed Tate.
\end{theorem}

There are sufficient conditions on $G$ which imply that $G$ is mixed Tate. For example, by \cite[Theorem 9.6]{to} it is enough to show that every proper subgroup $H\subset G$ is mixed Tate and that there exists a faithful representation $V$ of $G$ such that the variety $(V-S)\sslash G$ is mixed Tate, where $S$ is the closed subset of $V$ where $G$ does not act freely. 

For $K\subset G$ a subgroup, let $N_K:=\{g\in G|\,gKg^{-1}=K\}$ be the normalizer of $K$ and let $N'_K:=N_K/K$.

\begin{lemma}\label{linearschemes}
Let $G$ be a finite group such that 
$N'_K$ is abelian for every subgroup $1<K\subset G$.
Let $V$ be a representation of $G$ and let $S\subset V$ be the locus of points with non-trivial stabilizer. Then $(V-S)\sslash G$ is a linear scheme if and only if $V\sslash G$ is a linear scheme.
\end{lemma}

\begin{proof}
It suffices to check that $S\sslash G$ is a linear scheme. We use induction on $|G|$. The statement is clear if $|G|$ is a prime number, because then $G$ is a cyclic group and $S$ is a subspace of $V$, and so $S\sslash G\cong S$ is an affine space.

For $K\subset G$ a subgroup, let $V^K\subset V$ be the subspace of points fixed by $K$ and let 
\[V_K:=V^K-\bigcup_{K<L\subset G}V^L.\] 
If $K'$ is a subgroup of $G$ conjugate to $K$, the images of $V_K\sslash N'_K$ and $V_{K'}\sslash N'_{K'}$ in $V\sslash G$ are the same. 
Let $I$ be a set of subgroups of $G$ such that any subgroup $K$ of $G$ is conjugate to a unique group in $I$.
We have that $S=\bigsqcup_{1<K\subset G}V_K$ and there is a stratification
\[S\sslash G=\bigsqcup_I V_K\sslash N'_K\] It suffices to check that $V_K\sslash N'_K$ is a linear scheme for any $1<K\subset G$. 
The group $N'_K$ is abelian, so it satisfies the hypothesis of the Proposition. We have that $|N'_K|<|G|$, 
so by the induction hypothesis we know that $V_K\sslash N'_K$ is a linear scheme if and only if $V^K\sslash N'_K$ is a linear scheme.
By Proposition \ref{32}, the quotient $V^K\sslash N'_K$ is a linear scheme, thus $V_K\sslash N'_K$ is a linear scheme.
\end{proof}



Any non-abelian group of order $p^3$ has a faithful irreducible representation. Indeed, a $p$-group has a faithful irreducible representation if and only if its center is cyclic \cite[p.29]{is}, and $Z(G)$ has order $p$ for any non-abelian group of order $p^3$. Moreover, all irreducible representations of a group $G$ of order $\leq p^4$ have dimension $1$ or $p$. Any group of order $p^3$ satisfies the hypothesis of Proposition \ref{linearschemes} because
for every subgroup $1<K\subset G$, the quotient
$N_K/K$ has order $1, p,$ or $p^2$, and thus it is abelian.
It is thus sufficient to prove the following:

\begin{theorem}\label{41}
Let $k$ be a field of characteristic not $p$ which contains a primitive $p^2$-root of unity. 
Let $G$ be a non-abelian group of order $p^3$ and let $V$ be an irreducible representation of degree $p$. Then $V\sslash G$ is a linear scheme.
\end{theorem}

\subsection{} There are two non-abelian groups of order $p^3$. For a classification of $p$-groups of order $\leq p^4$ and their representations, see \cite{chu}.

\subsubsection{}\label{first}
The first group is $G\cong (\mathbb{Z}/p\times \mathbb{Z}/p)\rtimes \mathbb{Z}/p$, which can be also written as $$G=\left\langle \sigma, \pi, \tau| \sigma^p=\pi^p=\tau^p=1, \sigma\pi=\pi\sigma, \sigma\tau=\tau\sigma, \tau\pi\tau^{-1}=\sigma\pi\right\rangle.$$ It has a faithful irreducible representation $(\rho, V)$ which can be written explicitly on a basis $(e_i)_{i=1}^p$ of $V$ as follows: \begin{align*}
\rho(\sigma)&=\text{diag}(\zeta,\dots,\zeta),\\
\rho(\pi)&=\text{diag}(1,\zeta,\dots,\zeta^{p-1}),\\
\rho(\tau)&=P,
\end{align*} where $P$ is the matrix which permutes the basis $e_1\mapsto e_2\mapsto\dots\mapsto e_p\mapsto e_1$, and $\zeta$ is a primitive $p$-th root of unity. 

\subsubsection{}\label{second}
The second group is $G\cong \mathbb{Z}/p^2\rtimes \mathbb{Z}/p$, which can be also written as $$G=\left\langle \sigma, \tau| \sigma^{p^2}=\tau^p=1,\tau\sigma \tau^{-1}=\sigma^{1+p}\right\rangle.$$ It has a faithful  irreducible representation $(\rho, V)$ given by 
\begin{align*}
    \rho(\sigma)&=\text{diag}\,(\omega,\omega^{1+p},\dots,\omega^{1+p(p-1)}),\\
\rho(\tau)&=P,
\end{align*}
where $\omega$ is a primitive $p^2$-root of unity and $P$ is the permutation matrix defined above. 

\subsection{}
The proof of Theorem \ref{41} will be given in Section \ref{4}. In the rest of this Section, we include proofs for two Propositions used in its proof. The first one gives a proof of the already known fact that $BG$ is mixed Tate for $G$ abelian group \cite[Corollary 9.10]{to}. Recall that the exponent of a group is defined as the least common multiple of the orders of all elements of the group.

\begin{lemma}\label{32}
Let $N$ be an abelian $p$-group, and let $V$ be a $N$-representation over a field $k$ of characteristic not $p$ which contains the $p^e$ roots of unity, where $p^e$ is the exponent of $N$. Then $\text{Spec }k[V]^N$ is a linear scheme.
\end{lemma}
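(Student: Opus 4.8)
The plan is to reduce to the case of a diagonal action and then realize $\text{Spec }k[V]^N$ as an affine toric variety that is stratified into tori. First I would diagonalize, which is exactly where the hypotheses are used. Since $N$ is abelian, $\text{char}\,k\neq p$ (so $|N|$ is invertible and $V$ is semisimple by Maschke), and $k$ contains the $p^e$-th roots of unity with $p^e=\exp N$, the field $k$ is a splitting field for $N$. Hence $V$ decomposes as a direct sum of one-dimensional representations, and in a suitable basis $e_1,\dots,e_n$ the group acts by $g\cdot e_i=\chi_i(g)e_i$ for characters $\chi_i\colon N\to \mu_{p^e}\subset k^\times$. In the dual coordinates this presents $N$ as a finite diagonal subgroup of the torus $T=\mathbb{G}_m^n$ acting on $V=\mathbb{A}^n$, so that $\text{Spec }k[V]^N=V/N$, the invariant ring being the monoid algebra on the monomials $x^a$ with $\prod_i \chi_i(g)^{a_i}=1$ for all $g\in N$.

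Next I would stratify. For $I\subseteq\{1,\dots,n\}$ let $U_I\subset V$ be the locally closed subset where $x_i\neq 0$ exactly for $i\in I$; then $U_I\cong\mathbb{G}_m^{|I|}$ is a single $T$-orbit, the $U_I$ are $N$-stable and partition $V$, and the closure of $U_I$ is the coordinate subspace $V_I=\bigsqcup_{J\subseteq I}U_J$. Passing to the quotient, $V/N=\bigsqcup_I U_I/N$ is a stratification with the property that the closure of each stratum is a union of strata. The key geometric input is that each stratum $U_I/N$ is again a split torus: $N$ acts on $U_I\cong\mathbb{G}_m^{|I|}$ through its image $\bar N$, a finite subgroup of the torus, and the quotient of a split torus by a finite subgroup is again a split torus, its character lattice being the finite-index sublattice of $\bar N$-invariant characters.

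It then remains to assemble the pieces, for which I would establish two facts. First, every split torus $\mathbb{G}_m^m$ is a linear scheme: by induction on $m$ one writes $\mathbb{G}_m^m$ as the open complement of the closed, linear subscheme $\{0\}\times\mathbb{G}_m^{m-1}$ inside $\mathbb{A}^1\times\mathbb{G}_m^{m-1}$, the second defining rule then giving linearity, where one uses that the product of a linear scheme with $\mathbb{A}^1$ is linear (itself proved by induction on the construction length of the linear scheme, applying the three rules factor by factor). Second, the general assembly statement: a finite-type scheme stratified into linear strata, with the closure of each stratum a union of strata, is itself linear. This follows by induction on the number of strata, since a stratum of minimal dimension is closed and linear, the union of the remaining strata is an open linear subscheme by induction, and the third defining rule then shows the total space is linear. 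Applying this to the stratification of $V/N$ by the tori $U_I/N$ yields the claim.

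The step I expect to be the main obstacle is not any single hard computation but the bookkeeping in this assembly: verifying carefully that the torus strata $U_I/N$ genuinely form a stratification closed under taking closures of strata, so that the assembly lemma applies, and pinning down the elementary but slightly fiddly fact that tori, and more generally products of linear schemes with affine spaces, remain within the inductively defined class of linear schemes. Everything else — the diagonalization and the identification of orbit quotients with tori — is forced once the root-of-unity hypothesis is invoked.
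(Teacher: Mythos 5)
Your proof is correct, and its skeleton --- diagonalize the action, stratify $V$ by the coordinate tori $U_I$, show that the invariants of each torus stratum again form a split torus, then assemble --- is the same as the paper's. The genuine difference is in how the key step is established. The paper proves that the invariant ring of a torus under the diagonal action is again a Laurent polynomial ring by induction on $\operatorname{card} N$, peeling off one cyclic factor $\left\langle \sigma\right\rangle$ at a time and exhibiting explicit monomial generators $x_1^{p^s}, x_2x_1^{-a_2},\dots,x_dx_1^{-a_d}$, with equality verified by a freeness-of-rank-$p^s$ argument; you instead quotient by all of $\bar N$ at once via the lattice-theoretic observation that the invariant monomials are exactly those in the sublattice $M'=\{m: \chi^m|_{\bar N}=1\}$, which has finite index in $\mathbb{Z}^{|I|}$ and hence is itself free of rank $|I|$, so $k[M']$ is again a Laurent ring. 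Your route is cleaner and more general (no induction on the group order, no explicit generators), and you also make explicit two points the paper leaves implicit: that split tori are linear schemes, and that a stratification by linear strata satisfying the frontier condition assembles into a linear scheme (your minimal-stratum induction is valid, since the strata are irreducible and frontiers drop dimension). What the paper's explicit version buys is constructive control of the invariants: the monomial generators it produces (the $y_2^{a_2}, y_2^{a_3}y_3,\dots$) are reused verbatim in the proof of Theorem 4.1 for the non-abelian case, where one must track how $\tau$ acts on explicit coordinates of the invariant ring in order to build the $\mathbb{Z}[\tau]$-module $W$; your existence-style argument proves the present lemma but would not by itself supply those coordinates.
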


\begin{proof}
As $\text{char } k \neq p$, the representation $V$ decomposes as a sum of one dimensional representations, and thus we can choose a basis $x_1,\dots,x_d$ of $V$ on which $N$ acts diagonally. We prove the statement by induction on $|N|$. 
The base case, when $N$ is the trivial group, is clear. In general, choose $\sigma \in N$ such that $N=\left\langle \sigma\right\rangle\oplus M$, where $\left\langle \sigma\right\rangle$ denotes the subgroup of $N$ generated by $\sigma$. Assume that $\sigma$ has order $p^s$.
We will use the following stratification:
$$ 
\text{Spec }k[x_1,\dots,x_d]=\bigsqcup_{J\subset [d]} 
\text{ Spec }k\left[x_j^{\pm 1}\Big| j\in J\right],$$
where the disjoint union is taken after all sets $J\subset [d]$. This stratification is the partition of the affine space $\mathbb{A}^d_k$ into $2^d$ schemes $P_J$ with $x_j\neq 0$ for $j\in J$ and $x_j=0$ for $j\not \in J$. 
We obtain a stratification 
\begin{equation}\label{strati2}
\text{Spec }k[x_1,\dots,x_d]^{\left\langle \sigma\right\rangle}=\bigsqcup_{J\subset [d]} 
\text{ Spec }k\left[x_j^{\pm 1}\Big| j\in J\right]^{\left\langle \sigma\right\rangle}.\end{equation} 
It is enough to show that 
\begin{equation}\label{monomials}
\text{Spec }k\left[x_1^{\pm 1},\dots,x_d^{\pm 1}\right]^{\left\langle \sigma\right\rangle}\cong
\text{ Spec } k\left[y_j^{\pm 1}\right],\end{equation} where the $y_j$ are monomials in $x_i$. The analogous statement holds for any stratum on the right hand side of \eqref{strati2}.
Once we show \eqref{monomials}, we can reduce the problem from $N$ to $M$ for various representations of $M$.

To find such a decomposition, let $\sigma\cdot x_i=\zeta^{a_i}x_i,$ where $\zeta$ is a primitive $p^s$-root of unity chosen such that $a_1=1$. Then $$k\left[x_1^{\pm 1},\dots,x_d^{\pm 1}\right]^\sigma=k\left[x_1^{p^s},x_2x_1^{-a_2},\dots,x_dx_1^{-a_d},\frac{1}{x_1^Qx_2\dots x_d}\right],$$ where $Q:=p^s-a_2-\dots-a_d.$ The right hand side is included in the left hand side, and $k\left[x_1^{\pm 1},\dots,x_d^{\pm 1}\right]$ is a free $k\left[x_1^{p^s},x_2x_1^{-a_2},\dots,x_dx_1^{-a_d},\frac{1}{x_1^Qx_2\dots x_d}\right]$-module of rank $p^s$, so the two sides are indeed equal.
\end{proof}

Consider the torus $\left(\mathbb{G}_m\right)^p$ with coordinates $w_1,\cdots, w_p$ and let $W\subset \left(\mathbb{G}_m\right)^p$ be the subtorus with $w_1\cdots w_p=1$. The action of the cyclic group $\mathbb{Z}/p$ of order $p$ which permutes the factors of $\left(\mathbb{G}_m\right)^p$ by $w_i\mapsto w_{i+1}$ for $1\leq i\leq p$, where $w_{p+1}:=w_1$, extends to an action of $\mathbb{Z}/p$ on $W$.

\begin{lemma}\label{33}
The schemes $S:=W\sslash \mathbb{Z}/p$ and $T:=\left(\left(\mathbb{G}_m\right)^p-W\right)\sslash \mathbb{Z}/p$ are linear schemes.
\end{lemma}

\begin{proof}
Let $\tau$ be a generator of the cyclic group $\mathbb{Z}/p$. 
 Define \[W_d=1+\zeta^d w_1+\dots+\zeta^{d(p-1)}w_1\dots w_{p-1}\] for $d=0,\ldots,p-1$.
The stratification we are going to use is \[S=\bigsqcup_{d=0}^{p-1}\space \,S_d,\] where the schemes $S_d$ are defined as 
\[S_d:=\text{Spec }\left(k\left[w_1^{\pm 1},\ldots,w_{p-1}^{\pm 1},\frac{1}{W_d}\right]\Big/\left(W_0,\dots,W_{d-1}\right)\right)^\tau.\]
We will show that every such piece is a linear scheme. 

\textbf{Step 1.} We first explain the argument for $S_0=\text{Spec }k\left[w_1^{\pm 1},\cdots,w_{p-1}^{\pm 1},\frac{1}{W_0}\right]^\tau$. 
Define \[s_i:=\frac{\prod_{j\leq i}w_j}{W_0},\] for $i\in \{0,\ldots,p-1\}$, $w_0:=1$. Observe that $s_0+\dots+s_{p-1}=1$ and that $k\left[w_1^{\pm 1},\cdots, w_{p-1}^{\pm 1},\frac{1}{W_0}\right]\cong k\left[s_0^{\pm 1},\cdots, s_{p-1}^{\pm 1}\right]\big/(s_0+\cdots+s_{p-1}-1)$. Further, $\tau$ acts via $\tau: s_0\mapsto s_1\mapsto \dots\mapsto s_{p-1}\mapsto s_0.$
To show that $$\text{Spec }\left(k\left[s_0^{\pm 1},\cdots, s_{p-1}^{\pm 1}\right]/\left(s_0+\dots+s_{p-1}-1\right)\right)^\tau$$ is a linear scheme, we linearize the action by introducing the variables \[v_i=s_0+\zeta^is_1+\dots+\zeta^{i(p-1)}s_{p-1},\] $v_0=1$. Then $\tau v_i=\zeta^{-i}v_i$ and \[s_i=\frac{v_0+\zeta^{-i}v_1+\dots+\zeta^{-i(p-1)}v_{p-1}}{p}.\]
In this basis, $S_0$ becomes $$\text{Spec}\left(k\left[v_0,\dots,v_{p-1},\frac{1}{\prod_{i=0}^{p-1}\tau^i(l)}\right]/(v_0-1)\right)^\tau \cong \text{Spec}\,k\left[v_1,\dots,v_{p-1},\frac{1}{\prod_{i=0}^{p-1}\tau^i(l)}\right]^\tau,$$ where $l=1+v_1+\dots+v_{p-1}$ is the equation of a hyperplane. 
Now, we can realize $S_0$ as the complement of a linear scheme inside an affine space. Indeed, 
\begin{multline*}
    \text{Spec }k[v_1,\dots,v_{p-1}]=\\\text{Spec}\,k\left[v_1,\dots,v_{p-1},\frac{1}{\prod_{i=0}^{p-1}\tau^i(l)}\right] \bigsqcup \text{Spec }\left(k\left[v_1,\dots,v_{p-1}\right]/\prod_{i=0}^{p-1}\tau^i(l)\right)
    \end{multline*}
    and $\tau$ acts on both terms on the bottom line. 
    
    Observe that $\text{Spec }\left(k\left[v_1,\dots,v_{p-1}\right]/\prod_{i=0}^{p-1}\tau^i(l)\right)$ is the union of the hyperplanes $l$, $\tau(l), \cdots, \tau^{p-1}(l)$, which are cyclically permuted by $\tau$. 
Both $\text{Spec }k[v_1,\dots,v_{p-1}]^\tau$ and $\text{Spec }\left(k\left[v_1,\dots,v_{p-1}\right]/\prod_{i=0}^{p-1}\tau^i(l)\right)^\tau$ are linear schemes, so $S_0$ is indeed a linear scheme. 
\\

\textbf{Step 2.}
Fix $0\leq d\leq p-1$.
The proof that $S_d$ is a linear scheme is similar to the one in Step $1$. 
Define 
\[s_i=\frac{\prod_{j\leq i}w_j}{W_d},\] for $i=0,\ldots,p-1$, $w_0:=1$. Observe that $s_0+\dots+\zeta^{d(p-1)}s_{p-1}=1$ and that
$k\left[w_1^{\pm 1},\cdots, w_{p-1}^{\pm 1},\frac{1}{W_d}\right]=
k\left[s_0^{\pm 1},\cdots, s_{p-1}^{\pm 1}\right]/\left(s_0+\cdots+\zeta^{d(p-1)}s_{p-1}-1\right)$. Furthermore, we have that
\[W_e=\frac{s_0+\dots+\zeta^{e(p-1)}s_{p-1}}{s_0}\] for $e\leq d$,
so computations similar to those for $S_0$ show that
$$S_d\cong \text{Spec} \left(k\left[s_0^{\pm 1},\cdots, s_{p-1}^{\pm 1}\right]/I\right)^\tau,$$
where $I$ is the ideal generated by $s_0+\zeta^es_1+\dots+\zeta^{e(p-1)}s_{p-1}$ for all $0\leq e\leq d-1$, and by $s_0+\zeta^ds_1+\dots+\zeta^{d(p-1)}s_{p-1}-1$.
Changing the basis to $v_j$ defined as in Step 1, we find out that $$S_d\cong \text{Spec }\left(k\left[v_{d+1},\dots,v_{p-1}, \frac{1}{\prod_{i=0}^{p-1}\tau^i(l)}\right]^\tau\right).$$ The end of the argument in Step 1 shows that $S_d$ is a linear scheme. 
\\

\textbf{Step 3.}
The proof that $T$ is a linear scheme is already contained in the above argument. Indeed, introduce the basis \[v_j=s_0+\zeta^js_1+\dots+\zeta^{j(p-1)}s_{p-1},\] for $j=0,\ldots, p-1$. Then we need to show that 
$$\text{Spec }\left(k\left[v_0,\dots,v_{p-1},\frac{1}{\prod_{i=0}^{p-1}\tau^i(l)}\right]^\tau\right)$$ is a linear scheme, where $\tau$ acts on the $v_i$ by $\tau(v_i)=\zeta^{-i}v_i$ and $l=v_0+\dots+v_{p-1}$ is a hyperplane. The same argument as in Step $1$ shows this is a linear scheme.
\end{proof}

\section{Proof of Theorem \ref{41}}\label{4}

\subsection{}
In the beginning, we will work in a little more general framework which also covers some groups of order $p^4$. 
Thus, assume for the moment that $G$ has order $\leq p^4$ and has an irreducible representation of dimension $p$. We may assume that $V$ is faithful, and let $\rho:G\rightarrow GL(V)$. As $\rho$ is irreducible, it is induced from a one dimensional representation of a subgroup $N\subset G$, that is, $\rho=\text{Ind}_N^G\psi$ with $\psi:N\rightarrow GL(W)$ and with $W$ one dimensional \cite{la}. As $V$ has dimension $p$, the subgroup $N$ has index $p$ in $G$, and so $N\trianglelefteq G$. 

Choose representatives $\left\{1, t,\dots,t^{p-1}\right\}$ for the cosets of $G/N$. The explicit form of $\rho$ is $$\rho(g)=(\psi(t^{-i}gt^j))_{0\leq i,j\leq p-1},$$ where $\psi(g)=0$ if $g \not \in N$. 

If $Z(G)\not \subset N$, we can choose $t\in Z(G)$. Then $\rho(g)=(\psi(gt^{i-j})),$ so $\rho(g)=\psi(g)I,$ for every $g\in N$. As $\rho$ is faithful, this implies that $N\subset Z(G)$, and further that $G$ is abelian, contradicting that $G$ has an irreducible representation of dimension $p$. 

We thus have that $Z(G)\subset N$. In order for $\rho$ to be faithful, $\psi|_{Z(G)}$ needs to be faithful, too, so $Z(G)$ is cyclic.

Using the explicit description of $\rho$, we have that $\rho(G)\subset T\cdot W,$ where $T$ is the group of diagonal matrices and $W$ is the group of permutation matrices. By identifying $G$ with its image $\rho(G)$, $G$ can be written as a semi-direct product $N\rtimes M$, with $M\cong \mathbb{Z}/p,$ and $N$ an abelian $p$-group with $|N|\leq p^3.$

\subsection{}
The plan is to construct a decomposition of $V\sslash G$ into smaller linear schemes. We isolate one open subset of $V\sslash G$ and decompose its complement in linear schemes. After that, we show that the open subset is itself a linear scheme.

Choose a basis $x_1,\dots,x_p$ of $V$ on which $N$ acts diagonally and which is cyclically permuted by $\tau$, the generator of $M$. Observe that $$V\sslash G=\text{Spec }k[x_1,\dots,x_p]^G=\text{Spec }\left(k[x_1,\dots,x_p]^N\right)^\tau.$$
As we have already discussed in the proof of Proposition \ref{32}, there is a stratification
$$ 
\text{Spec }k[x_1,\dots,x_p]=\bigsqcup_{J\subset [p]} 
\text{ Spec }k\left[x_j^{\pm 1}\big| j\in J\right],$$
where the disjoint union is taken after all sets $J\subset [p]$. This stratification is the partition of the affine space $\mathbb{A}^p_k$ in the $2^p$ schemes $P_J$ with $x_j\neq 0$ for $j\in J$ and $x_j=0$ for $j\not \in J$. 
As $N$ acts linearly on the functions $x_i$ for $1\leq i\leq p$, we have that
$$\text{Spec }k[x_1,\dots,x_p]^N=\text{Spec }k\left[x_1^{\pm 1},\dots,x_p^{\pm 1}\right]^N\sqcup \bigsqcup_{J<[p]}\, \text{Spec }k\left[x_j^{\pm 1}|\,j\in J\right]^N.$$
By Proposition \ref{32}, each $\text{Spec }k\left[x_j^{\pm 1}|\,j\in J\right]^N$ for $J\subset [p]$ is a linear scheme. 

Let $t:[p]\to [p]$ be the function $t(x)=x+1$ for $x\leq p-1$ and $t(p)=1$. For $J\subset [p]$, let $t(J):=\{t(x)|\,x\in J\}\subset [p]$.  
Observe that $\tau$ permutes the schemes $S_J=\text{Spec }k\left[x_j^{\pm 1}|\,j\in J\right]^N$ by sending $S_J$ to $S_{t(J)}$. Consequently, there is a stratification

$$\text{Spec }k[x_1,\dots,x_p]^G=\text{Spec }k\left[x_1^{\pm 1},\dots,x_p^{\pm 1}\right]^G\sqcup \bigsqcup_A \text{Spec }S_J,$$ where $A$ is a set of representatives of the equivalence classes of the action of $t$ on the set of proper subsets of $[p]$. This means that, in order to show that $V\sslash G$ is a linear scheme, we have to prove that $\text{Spec }k\left[x_1^{\pm 1},\dots,x_p^{\pm 1}\right]^G$ is a linear scheme. We do this in the next Subsection.

\subsection{}
The study of this open piece is inspired by \cite{chu}. We begin by analyzing the $Z(G)$-invariants. If we can conveniently reduce the dimension of the scheme
$\text{Spec }k\left[x_1^{\pm 1},\dots,x_p^{\pm 1}\right]$
on which $G$ acts from $p$ to $p-1$, for example by finding a $G$-invariant element among the $Z(G)$-invariants, then the resulting ring will give a natural $\mathbb{Z}[\tau]$-representation on $\mathbb{Z}^{p-1}$. This representation was shown in \cite[page 687]{chu} to be generated by one element. By a theorem of Reiner \cite{re}, this representation is the canonical representation of $\mathbb{Z}[\tau]$ on $\mathbb{Z}[\zeta]$. This reduction can be done for the group $G\cong (\mathbb{Z}/p\times \mathbb{Z}/p)\rtimes \mathbb{Z}/p$. 

If all the elements of $N$ act by the same character of the $Z(G)$-invariants, then we can make a change of variables to reduce to the case of $\text{Spec }k\left[w_1^{\pm 1},\dots, w_p^{\pm 1}\right]^\tau,$ where $\tau$ cyclically permutes the $w_i$. For example, this is the case for $G\cong (\mathbb{Z}/p^2)\rtimes \mathbb{Z}/p$. In both situations, the final ingredient will be Proposition \ref{33}. 

\subsubsection{}\label{432} Assume that $G$ has order $p^3$. Then $Z(G)$ acts on $V$ via multiples of the identity, so

$$k\left[x_1^{\pm 1},\dots,x_p^{\pm 1}\right]^{Z(G)}=
k\left[x_1^{p},x_1^{-p},\frac{x_2}{x_1},\dots,\frac{x_1}{x_p}\right]=k\left[y_2^{\pm 1},\dots,y_p^{\pm 1}\right]\left[y_1^{\pm 1}\right],$$ for $y_1=x_1^{p}$, $y_i=\frac{x_{i+1}}{x_i}$, $i=2,\ldots,p$. Assume that we can replace $y_1$ with a $G$-invariant monomial $z_1$ such that 
$$k\left[y_2^{\pm 1},\dots,y_p^{\pm 1}\right]\left[y_1^{\pm 1}\right]=k\left[y_2^{\pm 1},\dots,y_p^{\pm 1}\right]\left[z_1^{\pm 1}\right].$$ 
This can be done when $G\cong (\mathbb{Z}/p\times \mathbb{Z}/p)\rtimes \mathbb{Z}/p$. Recall the notations from Subsection \ref{first}.
Indeed, in this case $Z(G)=\left\langle \sigma\right\rangle$. For the representation $(\rho, V)$ described in Subsection \ref{first}, 
$\pi$ acts on any $y_i$, $i=2,\ldots,p$, by multiplication with $\zeta$ and it fixes $y_1$, while \[\tau: y_2\mapsto \dots\mapsto y_p\mapsto \frac{1}{y_2\dots y_p}\] and $\tau(y_1)=y_1y_2^p$. If we replace $y_1$ by $z_1=y_1y_2^{p-1}\dots y_{p-1}^2y_p$, then $z_1$ is indeed $G$-invariant and $$k\left[y_2^{\pm 1},\dots,y_p^{\pm 1}\right]\left[y_1^{\pm 1}\right]=k\left[y_2^{\pm 1},\dots,y_p^{\pm 1}\right]\left[z_1^{\pm 1}\right].$$
Even more, the same argument works for a $p$-group of cardinal $p^4$ with $Z(G)\cong \mathbb{Z}/p^2$ and $N$ different from $\mathbb{Z}/p^3$. Indeed, in this case, $N\cong Z(G)\oplus \left\langle \pi\right\rangle$, and the $Z(G)$-invariants of $k\left[x_1^{\pm 1},\dots,x_p^{\pm 1}\right]$ are $$k\left[x_1^{p^2},x_1^{-p^2},\frac{x_2}{x_1},\dots,\frac{x_1}{x_p}\right]=k\left[y_2^{\pm 1},\dots,y_p^{\pm 1}\right]\left[y_1^{\pm 1}\right],$$ for $y_1=x_1^{p^2}$, $y_i=\frac{x_{i+1}}{x_i}$ for $i=2,\ldots, p$ . Observe that $\pi$ acts trivially on $y_1$ and by a $p$-root of unity on the others $y_i$, and that \[\tau: y_2\mapsto \dots\mapsto y_p\mapsto \frac{1}{y_2\dots y_p}\] and $\tau(y_1)=y_1y_2^{p^2}$. In particular, this implies that $y_1y_2^p\dots y_p^{p(p-1)}$ is $G$-invariant, so the above argument works. 

\subsubsection{}
Assume $G\cong \mathbb{Z}/p^2\rtimes \mathbb{Z}/p$. Recall the notations from Subsection \ref{second}.
 The center is generated by $\sigma^p$. The element $\sigma$ acts on any $y_i$, $i=1,\ldots,p$, by multiplication with $\zeta$, while
 \[\tau: y_2\mapsto \dots\mapsto y_p\mapsto \frac{1}{y_2\dots y_p}\]
 and $\tau(y_1)=y_1y_2^p$. Replace $y_1$ with $y_1y_2^{p-1}\dots y_{p-1}^2y_p$.
 Then $\sigma(y_1)=\zeta y_1$, and $\tau(y_1)=y_1$.
 Taking $\sigma$-invariants, $$k\left[y_1^{\pm 1},\dots,y_p^{\pm 1}\right]^{\sigma}=k\left[y_1^p,\frac{y_2}{y_1},\dots,\frac{y_p}{y_1},\text{their inverses}\right],$$
 which can be further written as $k\left[w_1^{\pm 1},\dots,w_p^{\pm 1}\right]$ for
 $w_1=y_1^p$, $w_i=\frac{y_i}{y_1},$ for $i=2,\ldots,p$.
 Observe that \[\tau: w_2\mapsto w_3 \mapsto\dots\mapsto w_p\mapsto \frac{1}{w_1\dots w_p},\]
 and thus, by replacing $w_1$ with $\frac{1}{w_1\dots w_p}$,
 we need to show that $\text{Spec }k\left[w_1^{\pm 1},\dots,w_p^{\pm 1}\right]^\tau,$
 where $\tau$ acts by $\tau: w_1\mapsto\dots \mapsto w_p\mapsto w_1$,
 is a linear scheme. This follows from Proposition \ref{33}. The same argument shows that any group of the form $\mathbb{Z}/p^s\rtimes \mathbb{Z}/p$ is mixed Tate. In particular, this means that any group $G$ of order $p^4$ and center of order $p^2$ is mixed Tate.

\subsection{}
Assume from now on that we are in the situation from Subsection \ref{432}, in which the dimension of the scheme we want to prove is linear was reduced from $p$ to $p-1$. We will explain how to obtain a $\mathbb{Z}[\tau]$-representation on $\mathbb{Z}^{p-1}$. The argument works for any $p$-group and $V$ a $p$-dimensional representation, just that in this case we will get a representation of $\mathbb{Z}[\tau]$ on $\mathbb{Z}^p$. In order to compute the $\tau$-invariants of $k\left[y_2^{\pm 1},\dots,y_p^{\pm 1}\right]^{N}$, write $N=N_1\oplus N_2$ with $N_1$ cyclic. As in the proof of the Proposition \ref{32}, we have that $$k\left[y_2^{\pm 1},\dots,y_p^{\pm 1}\right]^{N_1}=
k\left[y_2^{a_2},y_2^{a_3}y_3,\dots,y_2^{a_p}y_p,\text{their inverses}\right].$$ If we repeat the computation for $N_2$ instead of $N_1$, we find that 

$$k\left[y_2^{\pm 1},\dots,y_p^{\pm 1}\right]^{N}=
k\left[y_2^{b_2},y_2^{b_3}y_3,\dots,y_2^{b_p}y_p,\text{their inverses}\right].$$
Let $z_i:=y_2^{b_i}y_i$ for $2\leq i\leq p$. Observe that $\tau$ acts on $z_i$ in the following way: 
\begin{align*}
    \tau(z_2)&=z_2^{a_{2,2}}z_3^{a_{3,2}},\\
    \tau(z_3)&=y_2^{b_{2,3}}z_3^{b_{3,3}}z_4
\end{align*}
for some explicit integer exponents.
For any $N$-invariant $z$, the element $\tau(z)$ is also $N$-invariant because \[n\tau z=\tau n_0z=\tau z\] for some $n_0\in N$. 
In particular, $\tau(z_2)$ is $N$-invariant, so $y_2^{b_{3,2}}$ is an integer power of $z_2$. This implies that $\tau(z_3)$ is a monomial in $z_2$, $z_3$, and $z_4$, and a similar computation shows that this is true for any $2\leq k\leq p$, namely that there are integer exponents such that $$\tau(z_k)=z_2^{a_{2,k}}\dots z_{k+1}^{a_{k+1,k}}.$$
Now, we can construct a $\mathbb{Z}[\mathbb{Z}/p]\cong \mathbb{Z}[\tau]$ representation $$W:=\mathbb{Z}^{p-1}=\mathbb{Z}\log(z_2)\oplus\dots\oplus\mathbb{Z}\log(z_2)$$ by defining $$\tau(\log(z_k))=a_{2,k}\log(z_2)+\dots+a_{k+1,k}\log(z_{k+1}).$$
By a theorem of Reiner \cite{re}, the representation $W$ is isomorphic to an ideal of $\mathbb{Z}[\zeta]$, where $\zeta$ is a primitive $p$-root of unity. Chu--Kang have shown in \cite[page 687]{chu} that all such representations coming from groups of order $\leq p^3$ are generated by one element, so $W\cong \mathbb{Z}[\zeta]$. Then we can choose monomials $w_i$ in the $z_i$ on which $\tau$ acts via \[\tau:w_1\mapsto w_2\mapsto\dots\mapsto w_{p-1}\mapsto \frac{1}{w_1\dots w_{p-1}}\] and such that 
\[k\left[z_2^{\pm 1},\dots,z_p^{\pm 1}\right]=k\left[w_1^{\pm 1},\dots,w_{p-1}^{\pm 1}\right].\]
We know that $\text{Spec }k\left[w_1^{\pm 1},\dots,w_{p-1}^{\pm 1}\right]^\tau$ is a linear scheme by Proposition \ref{33}, so $V\sslash G$ is indeed a linear scheme in our case.

\section{More on mixed Tate motives of a classifying space}\label{5}

In this Section, we assume that the base field $k$ has characteristic zero. 
\subsection{} Define the triangulated category of geometrical motives \[\text{DM}_{\text{gm}}(k;R)\subset \text{DM}(k;R)\] as the smallest thick subcategory which contains all the motives $M(X)(a)$ for $X$ a separated scheme of finite type over $k$ and $a$ an integer \cite{voe}, \cite[Section 5]{to}. 
In general, the motive of a quotient stack is not a geometric motive. 
For example, for a finite non-trivial group $G$, 
the Chow groups (with $\mathbb{Z}$ coefficients) $CH^i(BG)$ are non-trivial for infinitely many values of $i$ \cite[Theorem 3.1]{yag}, and thus the motive $M(BG)\in \text{DM}(k,\mathbb{Z})$ is not geometric. 
For an explicit computation of the motive of a quotient stack, let $k(1)$ be the one-dimensional representation on which $\mathbb{G}_m$ acts with weight one. 
Observe that $\left(k(1)^{\oplus(n+1)}-0\right)/\mathbb{G}_m\cong\mathbb{P}^n$ ``approximate" the motives associated to $\mathbb{G}_m$. We thus have that 
\begin{align*}
    M(B\mathbb{G}_m)&=\bigoplus_{j\geq 0} R(j)[2j],\\
    M^c(B\mathbb{G}_m)&=\prod_{j\leq -1} R(j)[2j].
\end{align*}
None of these motives are geometric.

 Even if the motives associated to a quotient stack are not geometric motives, they exhibit some properties which resemble geometric motives. Indeed, recall that for $X$ a proper scheme, $M^c(X)\cong M(X)$, and for $X$ a smooth scheme of pure dimension $n$ over $k$, $M^c(X)\cong M(X)^*(n)[2n]$ \cite[Section 5]{to}. 
 
  Let $X=Y/G$ be a smooth quotient stack for which we can define motives $M(X)$ and $M^c(X)$, see Subsection \ref{dmdef}.
 There is an isomorphism \begin{equation}\label{isodual}
     M(X)^*\cong M^c(X)(-\dim(X))[-2\dim(X)].
 \end{equation}
 The isomorphism in \eqref{isodual} follows from the fact that the dual of a direct sum in $\text{DM}(k, R)$ is a product, so the dual of a homotopy colimit is a homotopy limit.
\\

Furthermore, the dual of a mixed Tate motive in $\text{DM}(k; R)$ is not necessarily mixed Tate.
For example, if $k$ is algebraically closed, $M:=\bigoplus_{i\in \mathbb{N}} \mathbb{Z}$ is an element of $\text{DMT}(k;\mathbb{Z})$, but its dual in $\text{DM}(k, \mathbb{Z})$ is $M^*=\prod_{i\in \mathbb{N}} \mathbb{Z}$, which is not an element of $\text{DMT}(k;\mathbb{Z})$ \cite[Corollary 4.2]{to2}. 

However, $\text{DMT}_{\text{gm}}(k;R):=\text{DMT}(k;R)\cap \text{DM}_{\text{gm}}(k; R)$ is closed under taking duals \cite[Section 5.1]{lev}. 
The main result of this Section is:

\begin{theorem}\label{thm5}
Let $G$ be a finite group, let $k$ be a field of characteristic $0$, and let $R$ be an arbitrary ring. Then $M^c(BG)\in \text{DMT}(k;R)$ is mixed Tate if and only if $M(BG)\in \text{DMT}(k;R)$ is mixed Tate.
\end{theorem}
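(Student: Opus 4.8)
The plan is to reduce the statement to the finite-dimensional approximations $X_i=(V_i-S_i)/G$ used to define the two motives, and then to play the easy homotopy-colimit side against the subtle homotopy-limit side. Each $X_i$ is a smooth quasi-projective variety of pure dimension $n_i$ on which $G$ acts freely, so $M(X_i)$ and $M^c(X_i)$ are \emph{geometric} motives, and the duality recalled above specializes to $M^c(X_i)\cong M(X_i)^*(n_i)[2n_i]$. Since $\dim BG=0$, the same duality gives $M^c(BG)\cong M(BG)^*$; feeding $M^c(X_i)(-n_i)[-2n_i]\cong M(X_i)^*$ into the defining diagrams shows $M^c(BG)=\mathrm{holim}_i\,M(X_i)^*=(\mathrm{hocolim}_i\,M(X_i))^*$, consistently. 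Because $DMT_{gm}=DMT\cap DM_{gm}$ is closed under duals, for each fixed $i$ one gets the clean equivalence $M(X_i)\in DMT\iff M^c(X_i)\in DMT$.

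First I would treat $M(BG)=\mathrm{hocolim}_i M_i(BG)$. The easy implication is that if every $M(X_i)$ is mixed Tate then so is $M(BG)$: the telescope triangle
\[
\bigoplus_i M(X_i)\to\bigoplus_i M(X_i)\to M(BG)\to[1]
\]
exhibits $M(BG)$ as a cone of a map between arbitrary direct sums of mixed Tate motives, and $DMT$ is triangulated and closed under arbitrary direct sums. For the converse I would use that $\mathrm{codim}\,S_i\to\infty$: the transition map $M(X_i)\to M(X_{i+1})$ is an isomorphism on motivic cohomology, and hence on the relevant weight truncation, in a range growing with $i$, so each $M(X_i)$ is recovered from a truncation of $M(BG)$ and inherits mixed Tateness. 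This gives $M(BG)\in DMT\iff M(X_i)\in DMT$ for all $i$.

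The genuine difficulty, and the step I expect to be the main obstacle, is the homotopy-limit side $M^c(BG)=\mathrm{holim}_i\,M^c(X_i)(-n_i)[-2n_i]$. Here $DMT$ is \emph{not} closed under arbitrary products --- the object $\prod_{\mathbb N}R$ is not mixed Tate although $\bigoplus_{\mathbb N}R$ is --- so mixed Tateness of the terms does not formally propagate to the limit, and this is exactly where finiteness of $G$ must enter. The point to establish is that, because $\mathrm{codim}\,S_i\to\infty$, the tower $\{M^c(X_i)(-n_i)[-2n_i]\}$ is pro-constant on each bounded weight range: in any fixed weight only finitely many terms of the tower differ, so the pathological ``infinite product concentrated in a single weight'' that defeats $\prod_{\mathbb N}R$ never arises. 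Concretely I would show, via the slice/weight truncation $\tau_{\le N}$ and the vanishing of the relevant $\varprojlim^1$ terms, that $\tau_{\le N}M^c(BG)\cong\tau_{\le N}M^c(X_i)$ for $i\gg0$, so that mixed Tateness of the geometric motives $M^c(X_i)$ passes to every truncation and hence to $M^c(BG)$, and conversely. Combining the two descent statements with the fixed-$i$ duality equivalence yields the four-term chain $M(BG)\in DMT\iff M(X_i)\in DMT\ \forall i\iff M^c(X_i)\in DMT\ \forall i\iff M^c(BG)\in DMT$, which is the theorem.
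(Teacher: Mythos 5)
Your overall strategy --- playing the homotopy colimit against the homotopy limit through the approximations $X_i=(V_i-S_i)/G$ --- is genuinely different from the paper's, but it has essential gaps, and they sit exactly where the difficulty of the theorem lies. The first is the claim that $M(BG)\in DMT$ forces each $M(X_i)\in DMT$ because ``each $M(X_i)$ is recovered from a truncation of $M(BG)$.'' This is false: $M(BG)$ does not determine $M(X_i)$. The transition maps are isomorphisms only in a range growing with $i$; outside that range the motive of $X_i$ carries unstable information, depending on the choice of $V_i$, that is invisible to the colimit. Two different approximation systems yield the same $M(BG)$ but non-isomorphic approximations, so no functor applied to $M(BG)$, truncation or otherwise, can output $M(X_i)$. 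At best a truncation of $M(X_i)$ agrees with a truncation of $M(BG)$, which says nothing about the rest of $M(X_i)$. The same objection defeats the implication $M^c(BG)\in DMT\Rightarrow M^c(X_i)\in DMT$. So of the four implications in your closing chain, only the telescope argument (cones and arbitrary direct sums stay in $DMT$) and the fixed-$i$ duality $M(X_i)\in DMT\iff M^c(X_i)\in DMT$ are solid.

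The second gap is the holim step, which you rightly flag as the main obstacle but then only sketch. Even granting that the tower $\{M^c(X_i)(-n_i)[-2n_i]\}$ is pro-constant in each bounded weight range, concluding that $M^c(BG)$ is mixed Tate requires (i) the claimed $\varprojlim^1$ vanishing, and (ii) a completeness statement --- that $M^c(BG)$ is recovered from its own weight/slice truncations --- which fails for general objects of $DM$; this is exactly the subtlety behind the counterexample $\prod_{\mathbb N}R\notin DMT$ that you yourself cite. Neither point is established, and (ii) is a theorem-sized assertion, not a routine verification. For comparison, the paper sidesteps the tower entirely: it quotes Totaro's theorem that $M^c(BG)$ is mixed Tate iff $M^c(GL(n)/G)$ is, for a faithful representation $G\hookrightarrow GL(n)$; since $GL(n)/G$ is a single smooth variety, its motive is geometric, and duality inside $DMT_{gm}$ (which, unlike $DMT$, is closed under duals) converts this into a statement about $M(GL(n)/G)$; the remaining equivalence $M(GL(n)/G)\in DMT\iff M(BG)\in DMT$ is then proved as a bundle-descent statement (Lemma 5.3 and Theorem 5.2), using the right adjoint $C:DM\to DMT$, the Gysin triangle for stacks, the vanishing Lemma 5.4 for motivic homology of smooth quasi-projective schemes, and the motivic K\"unneth spectral sequence. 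To salvage your approach you would need to prove the slice-completeness of $M^c(BG)$ and the pro-constancy claims outright; the paper's lesson is that it is easier to route the problem through a single geometric object where duality is available than to push mixed Tateness through an infinite limit.
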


In light of the above counterexample of a mixed Tate motive whose dual is not mixed Tate, we see that mixed Tate motives of finite groups exhibit finiteness properties. A related result \cite[Theorem 3.1]{to2} says that any scheme $X$ of finite type over a field $k$ with $M^c(X)$ mixed Tate has finitely generated Chow groups $CH^\ast(X; R)$ as $R$-modules. This implies that $CH^\ast(BG; R)$ are finitely generated over $R$, when $G$ is a finite group with $BG$ mixed Tate.

\subsection{}
We reduce the proof of Theorem \ref{thm5} to:

\begin{theorem}\label{thm6}
Let $X$ be a smooth quotient stack and let $E$ be a $\mathbb{G}_m$-bundle over $X$. Then $M(X)$ is mixed Tate if and only if $M(E)$ is mixed Tate.
\end{theorem}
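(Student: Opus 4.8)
The plan is to exploit the Gysin (localization) triangle attached to the zero section of the associated line bundle, using one triangle for the easy direction and a Borel-type approximation for the hard direction. Write $L\to X$ for the line bundle whose complement of the zero section is the given $G_m$-bundle $E$, so that $E=L\setminus 0_X$, with $0_X\colon X\hookrightarrow L$ the zero section: a closed immersion of smooth quotient stacks of codimension $1$, with open complement $E$. The equivariant approximations used to define $M$ in Section $2$ are smooth schemes, so Voevodsky's Gysin triangle passes to the homotopy colimit and yields, after using homotopy invariance $M(L)\cong M(X)$, a distinguished triangle
\begin{equation}
M(E)\to M(X)\xrightarrow{\,c_1(L)\,} M(X)(1)[2]\to M(E)[1].\tag{$\ast$}
\end{equation}
Recall that $DMT$ is triangulated, closed under arbitrary direct sums, and closed under the Tate twist $(1)[2]$ and the shift $[1]$ (the twist is an autoequivalence permuting the generators $R(j)$). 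Hence if $M(X)\in DMT$ then $M(X)(1)[2]\in DMT$, and $(\ast)$ exhibits $M(E)$ as the third vertex of a triangle whose other two vertices lie in $DMT$; therefore $M(E)\in DMT$. This settles the forward implication.

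For the converse the single triangle $(\ast)$ is not enough, since knowing only the fiber $M(E)$ of $c_1(L)$ does not pin down $M(X)$; this is the main obstacle. To get around it I would realize $X$ as the quotient stack $[E/G_m]$ (the $G_m$-action on the torsor $E$ is free with quotient $X$) and compute $M(X)$ through the approximations $V_i=\mathbb{A}^i$ with the scaling $G_m$-action, whose non-free locus $\{0\}$ has codimension $i\to\infty$. With $Y=E$ these give $M_i(X)=M\big((\mathbb{A}^i\setminus 0)\times^{G_m}E\big)=M(L^{\oplus i}\setminus 0_X)$, where $L^{\oplus i}$ is the rank-$i$ bundle over $X$ associated to the diagonal scaling action. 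By the construction recalled in Section $2$, together with the independence of the chosen presentation, $M(X)\cong \mathrm{hocolim}_i\, M(L^{\oplus i}\setminus 0_X)$. Since $DMT$ is triangulated and closed under arbitrary direct sums it is closed under homotopy colimits, so it suffices to prove $M(L^{\oplus i}\setminus 0_X)\in DMT$ for every $i$.

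I would prove this by induction on $i$. The base case $i=1$ is $L\setminus 0_X=E$, which is mixed Tate by hypothesis. For the inductive step, decompose $L^{\oplus(i+1)}\setminus 0_X$ according to the last coordinate $w\in L$: the open locus $U=\{w\neq 0\}$ is the total space of the pullback of $L^{\oplus i}$ along $E=L\setminus 0_X\to X$, so $M(U)\cong M(E)\in DMT$ by homotopy invariance, while the closed complement $\{w=0\}$ is $L^{\oplus i}\setminus 0_X$, smooth of codimension $1$. The Gysin triangle
\begin{equation*}
M(E)\to M(L^{\oplus(i+1)}\setminus 0_X)\to M(L^{\oplus i}\setminus 0_X)(1)[2]\to M(E)[1]
\end{equation*}
then has its outer vertices in $DMT$ (the right one by the inductive hypothesis and closure under the Tate twist), so the middle term lies in $DMT$, completing the induction and hence the converse.

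The steps that require care are the passage of the Gysin triangle to the stacky and ind-level and the identification of $M(X)$ with the stated homotopy colimit; both follow from the finite-level statements for smooth schemes by taking homotopy colimits, exactly as in the construction of Section $2$, using that the approximations appearing there are smooth schemes on which the localization triangle and homotopy invariance are available.
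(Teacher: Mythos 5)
Your forward implication coincides with the paper's: both read off $M(E)\in DMT$ from the stack-level Gysin triangle $M(E)\to M(X)\to M(X)(1)[2]\to M(E)[1]$, obtained by taking homotopy colimits of the finite-level Gysin triangles for the smooth approximations. For the converse, however, you take a genuinely different route. The paper introduces the right adjoint $C\colon DM\to DMT$ of the inclusion, forms the cones $U$ of $C(E)\to M(E)$ and $W$ of $C(X)\to M(X)$, deduces from $U=0$ the periodicity $W\cong W(1)[2]$, and then kills $W$ by showing that the motivic homology of $W\otimes M(Z)$ vanishes for every smooth projective $Z$; this uses Lemma $5.4$ (proved via resolution of singularities, whence the characteristic-zero hypothesis of Section $5$), the motivic Kunneth spectral sequence, and the criterion of Theorem $7.2$ of \cite{to}. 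You instead run the Borel construction inside $DM$: identify $M(X)$ with $\text{hocolim}_i\, M(L^{\oplus i}\setminus 0_X)$, prove each term is mixed Tate by induction on $i$ using Gysin triangles, and conclude because $DMT$, being triangulated and closed under arbitrary direct sums, is closed under sequential homotopy colimits. Your argument is more elementary and geometric --- no adjoint functor, no spectral sequence, no resolution of singularities, hence in principle no restriction to characteristic zero --- and it also makes transparent why the statement is specific to $M$ rather than $M^c$: the analogous induction for $M^c$ would terminate in a homotopy limit, and $DMT$ is not closed under products (the paper's own example $\prod\mathbb{Z}$). What the paper's heavier method buys is precisely independence from such colimit presentations, which is why the same template appears in \cite{to}, \cite{to2} in settings where your argument would break down.

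The one step you should not dismiss as routine is the identification $M(X)\cong \text{hocolim}_i\, M(L^{\oplus i}\setminus 0_X)$. Since $E$ is a quotient stack rather than a scheme, the construction of Section $2$ does not apply verbatim to the presentation $[E/G_m]$: you must write $E=F/G$ with $F$ a scheme, so that $X=F/(G\times G_m)$, approximate using the $G\times G_m$-representations $V_i\oplus \mathbb{A}^i$ (whose free locus is $(V_i-S_i)\times(\mathbb{A}^i\setminus 0)$), and then compare the resulting diagonal homotopy colimit of scheme motives with the iterated colimit $\text{hocolim}_i\,\text{hocolim}_j\, M\bigl((\mathbb{A}^i\setminus 0)\times^{G_m}E_j\bigr)$ whose inner terms compute your stack motives $M(L^{\oplus i}\setminus 0_X)$. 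This comparison is a cofinality argument on Homs from the compact generators $R(a)[b]$, in the spirit of the independence-of-presentation results of Section $8$ of \cite{to}; it is standard but it is a genuine step, not merely ``taking homotopy colimits of finite-level statements.'' With that point made precise, your proof is correct.
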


Totaro has shown in \cite[Corollary 8.13]{to} that for a finite group $G$, $M^c(BG)$ is mixed Tate if and only if $M^c(GL(n)/G)$ is mixed Tate for a faithful representation $G\rightarrow GL(n)$. 
One knows that the category of geometric Tate motives $\text{DMT}_{\text{gm}}(k; R)$ is closed under taking duals, as mentioned above. Recall that for any geometric motive $X\in \text{DM}_{\text{gm}}(k; R)$, the map $X\xrightarrow{\sim} X^{**}$ is an isomorphism \cite[Lemma 5.5]{to}.  
As $GL(n)/G$ is a smooth scheme, and for any smooth scheme $S$ one has \[M(S)^*\cong M^c(S)(-\dim(S))[-2\dim(S)],\]
we see that it is enough to prove that $M(BG)$ is mixed Tate if and only if $M(GL(n)/G)$ is mixed Tate for a faithful representation $G\rightarrow GL(n)$. The strategy is to show the more general result, that for $X$ a quotient stack and $E$ a principal $GL(n)$-bundle over $X$, $M(X)$ is mixed Tate if and only if $M(E)$ is mixed Tate. The next Proposition, inspired by \cite[Lemma 7.13]{to}, shows that Theorem \ref{thm5} follows from Theorem \ref{thm6}.

\begin{lemma}
Assume that for any smooth quotient stack $X$ and any principal $\mathbb{G}_m$-bundle $F$ over $X$, $M(X)\in \text{DMT}(k; R)$ if and only if $M(F)\in \text{DMT}(k; R)$. Then, for any smooth quotient stack $X$ and any principal $GL(n)$-bundle $E$ over $X$, $M(X)\in \text{DMT}(k; R)$ if and only if $M(E)\in \text{DMT}(k; R)$.
\end{lemma}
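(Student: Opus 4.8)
The plan is to reduce the structure group from $GL(n)$ to $G_m$ by passing to the associated flag bundle and using the Levi decomposition of the Borel subgroup. Let $B\subset GL(n)$ be the subgroup of upper triangular matrices, $T\cong (G_m)^n$ the diagonal torus, and $U\subset B$ the unipotent radical of strictly upper triangular matrices, so that $B=T\ltimes U$. Given the principal $GL(n)$-bundle $E\to X$, I would factor the projection as the tower
$$E\longrightarrow E/U\longrightarrow E/B\longrightarrow X,$$
in which $E/B$ is the flag bundle of the rank $n$ vector bundle $\mathcal V=E\times_{GL(n)}\mathbb A^n$, the map $E/U\to E/B$ is a principal $T\cong (G_m)^n$-bundle, and $E\to E/U$ is a principal $U$-bundle. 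Each space in this tower is again a smooth quotient stack admitting the motive $M(-)$ of section $2$, so the hypothesis of the lemma is available for every $G_m$-bundle that appears.

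It then remains to analyze the three maps separately. For $E\to E/U$, since $U$ is split unipotent the torsor is an iterated torsor under vector groups, hence an iterated affine bundle; homotopy invariance of motives gives $M(E)\cong M(E/U)$, so one is mixed Tate exactly when the other is. For $E/U\to E/B$, I would write the principal $(G_m)^n$-bundle as a tower of $n$ principal $G_m$-bundles $E/U=F_n\to F_{n-1}\to\dots\to F_0=E/B$, taking one $G_m$-factor at a time with each $F_i$ a smooth quotient stack, and apply the hypothesis $n$ times to conclude that $M(E/U)\in DMT$ if and only if $M(E/B)\in DMT$. For the flag bundle $E/B\to X$, which is an iterated projective bundle, the projective bundle formula yields an isomorphism
$$M(E/B)\cong\bigoplus_j M(X)(j)[2j]$$
(a finite direct sum) in which $M(X)$ is a retract. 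Since $DMT$ is triangulated, closed under arbitrary direct sums, Tate twists and shifts, and therefore closed under retracts, this shows $M(E/B)\in DMT$ if and only if $M(X)\in DMT$.

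Chaining the three equivalences gives $M(E)\in DMT$ if and only if $M(X)\in DMT$, which is the assertion.

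The main obstacle I anticipate is not the group theory but the passage from schemes to quotient stacks: homotopy invariance for the $U$-torsor and the projective bundle formula for the flag bundle are standard for motives of schemes, and one must check that they remain valid for the motives $M(-)$ of quotient stacks, which are defined as homotopy colimits of the approximations $((V_i-S_i)\times Y)/G$. Concretely, one should verify that forming the flag bundle, the intermediate $G_m$-bundles, and the $U$-torsor commutes suitably with these finite-dimensional approximations, so that the scheme-level isomorphisms and summand decompositions pass to the homotopy colimit; granting this, the argument above goes through.
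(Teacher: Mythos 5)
Your proposal is correct and follows essentially the same route as the paper: both pass through the tower $E\to E/U\to E/B\to X$, use the iterated projective bundle structure of $E/B$ over $X$ together with closure of $DMT$ under direct sums and thickness (retracts), apply the $G_m$-hypothesis across the torus bundle $E/U\to E/B$, and conclude via homotopy invariance for the unipotent part $M(E)\cong M(E/U)$. Your explicit factorization of the $(G_m)^n$-bundle into $n$ successive $G_m$-bundles and your remark about checking compatibility with the finite-dimensional approximations are points the paper leaves implicit, but they do not change the argument.
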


\begin{proof}
Denote by $B$ the subgroup of upper triangular matrices in $GL(n)$.
 Then $E/B$ is an iterated projective bundle over $X$. Recall that $GL(n)$-bundles are Zariski locally trivial. We obtain the following Leray-Hirsch decomposition for motives
 $$M(E/B)\cong \bigoplus M(X)(a_j)[2a_j],$$ where $a_j$ are the dimensions of the $n!$ Bruhat cells of the flag manifold $GL(n)/B$, see also \cite[Proof of Lemma 7.13]{to}. 

Now, because $\text{DMT}(k; R)$ is closed under arbitrary direct sums, $M(X)\in \text{DMT}(k; R)$ implies $M(E/B)\in \text{DMT}(k; R)$. Conversely, $\text{DMT}(k; R)$ is thick \cite[Discussion after Lemma 5.4]{to}, so $M(E/B)\in \text{DMT}(k; R)$ implies $M(X)\in \text{DMT}(k; R)$. 

Next, let $U$ be the subgroup of strictly upper triangular matrices in $GL(n)$. Since $B/U\cong \mathbb{G}_m^n$, $E/U$ is a principal $\mathbb{G}_m^n$-bundle over $E/B$. Using the assumption about $\mathbb{G}_m$-bundles, we deduce that $M(E/U)\in \text{DMT}(k; R)$ if and only if $M(X)\in \text{DMT}(k; R)$. Finally, $U$ is an extension of copies of the additive group $\mathbb{G}_a$, so $M(E)\cong M(E/U)$, which means that $M(E)\in \text{DMT}(k; R)$ if and only if $M(X)\in \text{DMT}(k; R)$.
\end{proof}

\subsection{}
We will also need the following vanishing result:

\begin{lemma}\label{54}
If $Y$ is a smooth quasi-projective scheme, then $$\text{Hom}\,(R(i)[j],M(Y))=0,$$ for $j\leq i-2$.
\end{lemma}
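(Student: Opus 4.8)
The plan is to convert the statement into a vanishing range for Borel--Moore motivic cohomology and then prove that range by induction on dimension. First I would reduce to the case that $Y$ is purely of dimension $n$: since $Y$ has finitely many connected components $Y_\alpha$, one has $M(Y)=\bigoplus_\alpha M(Y_\alpha)$ and hence $\operatorname{Hom}(R(i)[j],M(Y))=\bigoplus_\alpha \operatorname{Hom}(R(i)[j],M(Y_\alpha))$, so it suffices to treat a single smooth $Y$ of pure dimension $n$. Because $k$ has characteristic $0$, the category $DM_{gm}$ is rigid, so $M(Y)$ is dualizable; combining the standard identity $\operatorname{Hom}(A,B)\cong\operatorname{Hom}(B^*,A^*)$ for dualizable objects with the duality $M(Y)^*\cong M^c(Y)(-n)[-2n]$ recalled in this section gives
$$\operatorname{Hom}(R(i)[j],M(Y))\cong\operatorname{Hom}(M^c(Y),R(n-i)[2n-j]).$$
Writing $A=n-i$ and $B=2n-j$, the hypothesis $j\le i-2$ is exactly $B\ge A+n+2$, so in particular $B>A+\dim Y$. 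Thus the lemma follows from the general claim that for every scheme $X$ of finite type over $k$ one has $\operatorname{Hom}(M^c(X),R(A)[B])=0$ whenever $B>A+\dim X$.

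Next I would prove this claim by induction on $d=\dim X$. The computational base case is $X$ smooth and projective: then $M^c(X)\cong M(X)$, so $\operatorname{Hom}(M^c(X),R(A)[B])=H^{B,A}(X)=CH^{A}(X,2A-B)$, and Bloch's higher Chow group $CH^{A}(X,m)$ vanishes as soon as $A>\dim X+m$, which for $m=2A-B$ is precisely $B>A+\dim X$ (for $d=0$ this is just the vanishing of $H^{p,q}$ of a field for $p>q$). To pass from this to an arbitrary $X$ I would use the localization triangle $M^c(Z)\to M^c(X)\to M^c(U)\to M^c(Z)[1]$ associated to a closed subscheme $Z$ with open complement $U$, which after applying $\operatorname{Hom}(-,R(A)[B])$ yields a long exact sequence in which $\operatorname{Hom}(M^c(X),R(A)[B])$ is squeezed between the analogous groups for $U$ and for $Z$. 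Two reduction steps then close the induction. Taking $U$ to be the dense smooth locus of $X_{\mathrm{red}}$ (which exists because $\operatorname{char}k=0$) and $Z=X\setminus U$ with $\dim Z<d$, the term for $Z$ is handled by the inductive hypothesis, and it remains to treat a smooth quasi-projective $U$ of dimension $\le d$; for such a $U$ I would choose a smooth projective compactification $\bar U$ (resolution of singularities, $\operatorname{char}k=0$) whose boundary $\bar U\setminus U$ has dimension $<d$, so the localization triangle for $U\subset\bar U$ reduces $U$ to the projective base case together with a boundary term again covered by the inductive hypothesis.

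The main obstacle is organizing this induction without circularity: the dense open $U$ produced by generic smoothness has the \emph{same} dimension $d$ as $X$, so the inductive hypothesis does not apply to it directly, and one must interpose the compactification step, which only feeds the strictly lower-dimensional boundary back into the induction. Once the bookkeeping of the three layers (smooth projective, smooth quasi-projective, arbitrary) is arranged so that each layer appeals only to the computational base case and to strictly smaller dimension, every term in the relevant long exact sequences vanishes throughout the range $B>A+\dim X$, and the claim---hence the lemma---follows. I would also verify explicitly that the duality isomorphism above is available for non-proper smooth $Y$, which is exactly the point where rigidity of $DM_{gm}$ in characteristic $0$ is used.
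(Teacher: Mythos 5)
Your proof is correct, but it takes a genuinely different route from the paper's. You dualize at the very start, using rigidity of $DM_{gm}$ in characteristic $0$ together with $M(Y)^*\cong M^c(Y)(-n)[-2n]$ to convert the statement into a vanishing range for compactly supported motivic cohomology, $\operatorname{Hom}(M^c(X),R(A)[B])=0$ for $B>A+\dim X$, which you then prove by induction on dimension: generic smoothness, localization triangles for $M^c$, smooth projective compactifications via resolution, and a base case done by a cycle-dimension count in higher Chow groups. The paper instead compactifies $Y$ first, choosing by resolution a smooth compactification $Z$ whose boundary $W$ is a simple normal crossings divisor; it dualizes the Gysin triangle to identify $\operatorname{Hom}(R(i)[j],M(Y))$ with $\operatorname{Hom}(M^c(W),R(n-i)[2n-1-j])$, and then proves the required vanishing for the SNC divisor $W$ by a double induction on the maximal number of branches through a point and on the number of components, using $M^c$-localization triangles and duality on the smooth pieces, all anchored to the cited vanishing $\operatorname{Hom}(R(i)[j],M^c(Z))=0$ for $j\le i-1$. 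Both arguments rest on the same pillars (characteristic-$0$ resolution, the duality for smooth schemes, localization for $M^c$), but yours buys a cleaner and more general statement with no SNC combinatorics at all---singular and reducible schemes are simply absorbed by the dimension induction---while the paper's stays tied to the geometry of the given $Y$ and to the Borel--Moore homology vanishing it can quote directly. One small repair to your write-up: you state the general claim for every finite type scheme, but the induction as written needs the smooth locus $U$ to be quasi-projective, so that it admits a smooth projective compactification; since open and closed subschemes of quasi-projective schemes are again quasi-projective, you should simply run the entire induction within the class of quasi-projective schemes, which is all the lemma requires.
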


\begin{proof}
Choose a smooth compactification $Z$ of $Y$ such that the complement $W:=Z\setminus Y$ is a divisor with simple normal crossings, which can be done since $k$ has characteristic $0$ \cite[Theorem 3.35]{kol}. Then, the Gysin distinguished triangle \cite[pg. 10]{voe} gives, for $c=\text{codim}\,W$:
$$M(W)\rightarrow M(Z)\rightarrow M(Y)(c)[2c]\rightarrow M(W)[1].$$ Taking the dual of this triangle we obtain, for $n=\text{dim}\,Y$:
$$M^c(W)^*(n)[2n-1]\rightarrow M(Y)\rightarrow M(Z)\rightarrow M^c(W)^*(n)[2n].$$ Both $\text{Hom}\,(R(i)[j],M(Z)[-1])$ and $\text{Hom}\,(R(i)[j],M(Z))$ are zero because $Z$ is projective. Indeed, in our case $M(Z)\cong M^c(Z)$ and $j\leq i-2$, and it is known that $\text{Hom}\,(R(i)[j],M^c(Z))=0 $ for any scheme $Z$ and any integers $i$ and $j$ with $j\leq i-1$ \cite[pg. 16]{to}. Thus, the $\text{Hom}$ long exact sequence obtained from this distinguished triangle gives that
$$\text{Hom}\,(R(i)[j],M^c(W)^*(n)[2n-1])\cong \text{Hom}\,(R(i)[j],M(Y)).$$
Observe that $W$ is proper, so $M(W)\cong M^c(W)$. Further, $$\text{Hom}\,(R(i)[j],M^c(W)^*(n)[2n-1])\cong \text{Hom}\,(M^c(W),R(n-i)[2n-1-j]).$$ Thus, it is enough to prove $$\text{Hom}\,(M^c(W),R(a)[b])=0,$$ for $b-a\geq n+1$. Further, $\text{dim}\,W<n$ and $W$ is a divisor with simple normal crossings, so there are at most $n$ divisor through any point of $W$.  To show this, we will use induction on $n$, the maximal number of divisors which pass through a given  point, and then on the number of connected components of $W$. If $n=1$ or if $W$ has only one component, then $W$ is smooth; in this case, $M(W)\cong M^c(W)$ and $M(W)^*\cong M(W)(-\text{dim}\,(W))[-2\,\text{dim}\,(W)]$. We need to show that $$\text{Hom}\left(R(i+\text{dim}\,W-n)[j+1+2\,(\text{dim}\,W-n)],M^c(W)\right)=0,$$ for $j\leq i-2$, where $i=n-a$ and $j=2n-1-b$. This follows from the vanishing property of motivic homology \[\text{Hom}\,(R(i)[j],M^c(Z))=0 \] for any scheme $Z$ and any integers $i$ and $j$ with $j\leq i-1$ \cite[page 16]{to}. In our case, $b-a\geq n+1$ is equivalent to $j\leq i-2$, and we know that $\text{dim}\,W<n$, thus $i+\text{dim}\,W-n\geq j+1+2\,(\text{dim}\,W-n)+1$.

For the general case, let $U$ a smooth connected component of $W$ and let $V$ the closure of $W\setminus U$ inside $W$. Then $V$ will be also be a divisor with simple normal crossings such that there are at most $n$ divisors passing through a given point, but will have less components than $W$. Further,  $T:=U\cap V$ will be a divisor with simple normal crossings, with at most $n-1$ divisors passing through any point. By the induction hypothesis, $\text{Hom}\,(M(T)[1], R(a)[b])=0$ for $b-a\geq n$, and $\text{Hom}\,(M(V)[1], R(a)[b])=0$ for $b-a\geq n+1$. Recall that we want to show $\text{Hom}\,(M(W)[1], R(a)[b])=0$ for $b-a\geq n+1$. For this, use the following two distinguished triangles
\begin{align*}
    M^c(U)\rightarrow M^c(W)&\rightarrow M^c(W-U)\rightarrow M^c(U)[1],\\
    M^c(T)\rightarrow M^c(V)&\rightarrow M^c(W-U)\rightarrow M^c(T)[1].
\end{align*}
From the second triangle, we get \begin{multline*}
    \text{Hom}\,(M^c(T)[1], R(a)[b])\rightarrow \text{Hom}\,(M^c(W-U),R(a)[b])\rightarrow \text{Hom}\,(M^c(V), R(a)[b])\\\rightarrow \text{Hom}\,(M^c(T), R(a)[b]).\end{multline*}
    We deduce that $\text{Hom}\,(M^c(W-U),R(a)[b])=0$ for $b-a\geq n+1$. Similarly, we can use the first triangle to deduce that $\text{Hom}\,(M^c(W),R(a)[b])=0$ for $b-a\geq n+1$.

\end{proof}

\subsection{} In this Subsection, we prove Theorem \ref{thm6}. We split its proof in a sequence of steps.

\subsubsection{} 
Let $T$ be the total space of a line bundle over $X$ such that $T-X\cong E,$ where $X\hookrightarrow T$ is embedded as the zero section.
We claim that there is a Gysin distinguished triangle \begin{equation}\label{gysin2}
M(T-X)\rightarrow M(T)\rightarrow M(X)(1)[2]\rightarrow M(T-X)[1].
\end{equation} Indeed, let $X=Y/G$ and $T=W/G$ with $Y$ smooth and $W$ an $\mathbb{A}^1$-bundle over $Y$. Consider the (smooth) approximations
\begin{align*}
    X_i&=((V_i-S_i)\times Y)/G,\\
    T_i&=((V_i-S_i)\times W)/G.
\end{align*}
Then we have the Gysin distinguished triangles \cite[Theorem 3.5.4]{voe}: 
\begin{equation}\label{gysini}
M(T_i-X_i)\rightarrow M(T_i)\rightarrow M(X_i)(1)[2]\rightarrow M(T_i-X_i)[1].
\end{equation}
The category $\text{DM}(k; R)$ is a model category with arbitrary direct sums and products \cite[Subsection 5]{to}, so it has an underlying triangulated derivator \cite[Theorem 6.11]{Cisinski}, \cite[Appendix 2, page 25]{Ke}. Thus the homotopy colimit of distinguished triangles is a distinguished triangle \cite[Corollary 11.4]{Ke}, and we thus obtain the Gysin triangle \eqref{gysin2}. Using $M(X)\cong M(T)$, the distinguished triangle \eqref{gysin2} becomes:
\begin{equation}\label{d1}
    M(E)\rightarrow M(X)\rightarrow M(X)(1)[2]\rightarrow M(E)[1].
\end{equation}
\subsubsection{}
The inclusion \[\text{DMT}(k; R)\hookrightarrow \text{DM}(k; R)\] has a right adjoint \[C:\text{DM}(k; R)\rightarrow \text{DMT}(k; R).\]  We will sometimes write $C(Z)$ instead of $C(M(Z))$ for $Z$ a quotient stack. Let $U$ be the cone of $C(E)\rightarrow M(E)$ and let $W$ be the cone of $C(X)\rightarrow M(X)$. There is a distinguished triangle 

$$U\rightarrow W\rightarrow W(1)[2]\rightarrow U[1].$$
Indeed, this triangle is induced from the triangle \eqref{d1}, the diagram
\begin{equation*}
\xymatrix{
C(E) \ar[r] \ar[d]
    & C(X) \ar[r] \ar[d]
    & C(E)(1)[2] \ar[r] \ar[d]
    & C(E)[1] \ar[d]\\
M(E) \ar[r] \ar[d]
    & M(X) \ar[r] \ar[d]
    & M(E)(1)[2] \ar[r] \ar[d]
    & M(E)[1] \ar[d]\\
U \ar[r] & W \ar[r] & W(1)[2] \ar[r] & U[1]
}
\end{equation*}
and the $3\times 3$ lemma.

\subsubsection{} 
Observe that $C(W)=0$. Indeed, $$M(X)\rightarrow C(X)\rightarrow W\rightarrow M(X)[1]$$ and, for any $i$ and $j$ integers, $$\text{Hom}(R(i)[j],M(X))\xrightarrow{\cong}\text{Hom}(R(i)[j],C(X)).$$ This implies that $W$ has trivial motivic homology groups.

Then the Tate motive $C(W)$ has trivial homology groups and so $C(W)=0$. Indeed, because $\text{Hom}(R(a)[b], C(X))=0$ and $R(a)[b]$ generate the category $\text{DMT}(k; R)$, we get that $\text{Hom}(M, C(X))=0$ for any mixed Tate motive $M$, and, in particular, that $\text{Hom }(C(X), C(X))=0$, so $C(X)=0$.

\subsubsection{}\label{disc}
We need to show $U=0$ if and only if $W=0$. If $W=0$, then it is immediate that $U=0$. Conversely, suppose $U=0$. In this case, \begin{equation}\label{aaa}
    W\cong W(1)[2].
\end{equation}
In \cite[Proposition 7.10]{dug}, Dugger and Isaksen have shown that one can compute, via a spectral sequence, the motivic homology of $X\otimes M$ from the motivic homology of $M$ and $X$, for any motive $X$ and any mixed Tate motive $M$. A related result \cite[Theorem 7.2]{to} says that if $$ C(W)\otimes C(M(Z))\xrightarrow{\cong} C(W\otimes M(Z)),$$ for any $Z$ a smooth, projective scheme, then $W$ is mixed Tate. We will use both these results in our argument below.     

The plan is the following: it is enough to show that $$ C(W)\otimes C(M(Z))\xrightarrow{\cong} C(W\otimes M(Z)),$$ for $Z$ a smooth, projective scheme. Taking into account that $C(W)\cong 0$, we will need to show that the motivic homology groups of any product $W\otimes M(Z)$ are trivial. 

We show that the motive $W$ has a vanishing property similar to the one of $M^c$ of a geometrical motive, namely that $\text{Hom}\,(R(i)[j],W)=0$ for $j\leq i-2$. Even more, we will be able to show that $\text{Hom}\,(R(i)[j],W\otimes M(Z))=0$ for $j\leq i-2$ for $Z$ a smooth projective scheme. This will imply that all the motivic homology groups of $W\otimes M(Z)$ are trivial, because $W\cong W(1)[2]$.
Consequently, we only need to show \begin{equation}\label{aa}
    \text{Hom}\,(R(i)[j],W\otimes M(Z))=0
\end{equation} for $j\leq i-2$, where $Z$ is a smooth projective scheme.

\subsubsection{}\label{discc}
First, by Proposition \ref{54}, we have that $\text{Hom}\,(R(i)[j],M(Y))=0$ for $j\leq i-2$ for a quasi-projective scheme $Y$. There is a distinguished triangle:
\begin{equation}\label{trr}
    M(X\times Z)\rightarrow C(M(X))\otimes M(Z)\rightarrow W\otimes M(Z)\rightarrow M(X\times Z)[1].\end{equation}
It is enough to show \begin{align*}
    \text{Hom}\,(R(i)[j], M(X\times Z))&=0,\\
    \text{Hom}\,(R(i)[j],C(M(X))\otimes M(Z))&=0
    \end{align*} for $j\leq i-2$.
To show that $\text{Hom}\left(R(i)[j],M(X\times Z)\right)=0$ for $j\leq i-2$, write $M(X\times Z)$ as the cone of a morphism 
\[
\bigoplus_{l\in I} M(S_l)\rightarrow \bigoplus_{l\in I} M(S_l)\to M(X\times Z)\to \left(\bigoplus_{l\in I} M(S_l)\right)[1],\]
where $S_l$ are quasi-projective schemes for $l$ in a set $I$. Because $R(i)[j]$ is a compact object inside $\text{DM}(k; R)$, we have that $$\text{Hom}\left(R(i)[j], \bigoplus_{l\in I} M(S_l)\right)=\bigoplus_{l\in I} \text{Hom}\,(R(i)[j], M(S_l))=0$$ for $j\leq i-2$. 
Finally, \begin{multline*}
    \text{Hom}\left(R(i)[j],
    \bigoplus_{l\in I} M(S_l)\right)\rightarrow \text{Hom}\,(R(i)[j],M(X\times Z))\\ \rightarrow \text{Hom}\left(R(i)[j], \left(\bigoplus_{l\in I} M(S_l)\right)[1]\right),\end{multline*} which immediately implies $\text{Hom}\left(R(i)[j],M(X\times Z)\right)=0$ for $j\leq i-2$.

\subsubsection{}
To show $\text{Hom}\left(R(i)[j],C(M(X))\otimes M(Z)\right)=0$ for $i\leq j-2$, use the motivic K\"{u}nneth spectral sequence \cite[Theorem 6.1]{to}:
$$E^{pq}_2=\text{Tor}^{H_.(k,R(\cdot))}_{-p,-q,i}\left(H_.(C(X),R(\cdot)), H_.(Z,R(\cdot))\right)\Rightarrow H_{-p-q}(C(X)\otimes Z, R(i)), $$ where $\text{Tor}_{-p,-q,i}$ denotes $(-q,i)$ bigraded piece of $\text{Tor}_{-p}$. The vanishing properties for the motivic homology of $C(M(X))$ and $M(Z)$ imply the desired result. Indeed, assume $i<0$. On the sheet $E^{pq}_2$, all nontrivial $H_.(k,R(\cdot))$ modules are concentrated in the lower left corner $j\leq i-2$, $p\leq 0$.
Every page $E^{pq}_n$ will be concentrated in the same lower left square, which implies the vanishing of motivic homology groups for $C(M(X))\otimes M(Z)$ for $j\leq i-2$. In particular, $\text{Hom}\,(R(i)[j], C(M(X))\otimes M(Z))=0$ for $j\leq i-2$. Using the triangle \eqref{trr} and the discussion in Subsection \ref{discc}, we see that \eqref{aa} holds.

\subsubsection{}
Finally, let $i$ and $j$ be arbitrary integers, and choose $a\leq i-j-2$. By \eqref{aaa} and \eqref{aa}, we have that $$\text{Hom}\,(R(i)[j], W\otimes M(Z))\cong \text{Hom}\,(R(i+a)[j+2a], W\otimes M(Z))\cong 0.$$ Thus the motivic homology of $W\otimes M(Z)$ is trivial for every smooth projective scheme $Y$. As discussed in Subsection \eqref{disc}, this implies that $W\cong 0$, and thus Theorem \ref{thm6} follows.
\\
\\


\end{document}